\documentclass[]{article}
   
\usepackage{graphicx}
\usepackage{xcolor}

\usepackage{amsmath}
\usepackage{amsfonts}
\usepackage{amssymb}
\usepackage{amsthm}

\usepackage{hyperref}

\providecommand{\keywords}[1]{{\it Keywords: } #1}
\providecommand{\ams}[1]{{\it AMS Classification: } #1}

\newtheorem{lemma}{Lemma}
\newtheorem{theorem}{Theorem}

\newcommand{\R}{\mathbb{R}}
\newcommand{\defv}[1]{#1 = (#1_1,\ldots,#1_{n})}
\renewcommand{\bar}[1]{\overline{#1}}
\def\diam{\mathrm{diam}}
\def\conct[#1,#2]{\mbox {${#1} \leftrightarrow {#2}$}}

\begin{document}
  
\title{On the lower bound of the sum of\\ the algebraic
  connectivity of a graph and its complement}

\author{Mostafa
  Einollahzadeh and
  Mohammad Mahdi Karkhaneei }

\date{\large \today }

\maketitle

  \begin{abstract}
    For a graph $G$, let $\mu_2(G)$ denote its second
    smallest Laplacian eigenvalue. It was conjectured that
    $\mu_2(G) + \mu_2(\overline{G}) \geq 1$, where $\bar{G}$
    is the complement of $G$. This conjecture has been
    proved for various families of graphs. Here, we prove
    this conjecture in the general case. Also, we will show
    that
    $\max\{\mu_2(G), \mu_2(\overline{G})\} \geq 1 -
    O(n^{-\frac 13})$, where $n$ is the number of vertices
    of $G$.
  \end{abstract}

  \ams{05C50}

  \keywords{Laplacian eigenvalues of graphs, Nordhaus-Gaddum
    type inequalities, Effective resistance, Laplacian
    spread}

  \section{Introduction}
  Let $G$ be a simple graph with $n$ vertices. The Laplacian
  of $G$ is defined to be $L := D - A$, where $A$ is the the
  adjacency matrix of $G$ and $D$ is the diagonal matrix of
  vertex degrees. The eigenvalues of $L$,
  \begin{equation*}
    0 = \mu_1(G) \leq \mu_2(G) \leq \cdots \leq \mu_{n}(G),
  \end{equation*}
  expose various properties of $G$. The ``algebraic
  connectivity'' of a graph $\mu(G) := \mu_2(G)$ is
  an efficient measure for connectivity of graphs.

  The Laplacian spread of a graph $G$ is defined to be
  $\mu_n(G) - \mu_{2}(G)$. It was conjectured
  \cite{MR2899742,MR2826144} that this quantity is at most
  $n-1$, or equivalently,
  $\mu_{2}(G) + \mu_2(\overline{G})$ is at least 1
  (since $\mu_n(G) = n - \mu_2(\bar{G})$).

  \paragraph{Conjecture.} For any graph $G$ of order
  $n\geq 2$, the following holds:
  \begin{equation*}
    \mu(G) + \mu(\bar{G}) \geq 1,
  \end{equation*}
  with equality if and only if $G$ or $\bar{G}$ is
  isomorphic to the join of an isolated vertex and a
  disconnected graph of order $n-1$.
  
  This conjecture was proved for trees \cite{MR2383736},
  unicyclic graphs \cite{MR2522991}, bicyclic graphs
  \cite{MR2605816}, tricyclic graphs
  \cite{MR2529789}, cactus graphs \cite{MR2723499},
  quasi-tree graphs \cite{MR2782229}, graphs with diameter
  not equal to 3 \cite{MR2826144}, bipartite graphs \cite{MR3262243}, and $K_3$-free
  graphs \cite{MR3506494}. Also, \cite{MR3834197} provided
  a constant lower bound for
  $\mu(G)+\mu(\bar{G})$, by proving that 
   $\max\{\mu(G),\mu(\bar{G})\} \geq \frac 25$.

  Here, we prove the conjecture, in the general case. The
  main idea of the proof is as follows. Suppose that $x$ and
  $y$, respectively, are eigenvectors of $L(G)$ and
  $L(\bar{G})$, corresponding to the eigenvalues
  $\mu_2(G)$ and $\mu_{2}(\bar{G})$, with
  $\|x\|_2=\|y\|_2=1$. We have { $\sum_i x_i=\sum_i y_i=0$ and }
  \begin{eqnarray*}
    \mu(G) + \mu(\bar{G}) &=&  {x^T L(G) x+y^T L(\bar{G}) y}\\
    &=& \sum_{\{i,j\} \in E(G)}(x_i-x_j)^2 +
        \sum_{\{i,j\} \notin E(G)}(y_i-y_j)^{2} \\
    &\geq& \sum_{i < j}\min\{(x_i-x_j)^2,(y_i-y_j)^2\}.
  \end{eqnarray*}
  So, it is enough to show that
  \begin{equation*}
    \sum_{i < j}\min\{(x_i-x_j)^2,(y_i-y_j)^2\} \geq 1.
  \end{equation*}
  Now, suppose that $M$ is the maximum of all of numbers
  $|x_i-x_j|^2$ and $|y_i-y_j|^2$, for every
  $1\leq i,j \leq n$. A simple algebraic identity (see
  Lemma~\ref{lem:identity}), shows that
  $ \sum_{i < j}\min\{|x_i-x_j|^2,|y_i-y_j|^2\} \geq \frac
  1M. $ So, if $M \leq 1$, the proof is complete. It remains
  to consider the case $M > 1$. We manage this case with
  some basic properties of the effective resistances between
  pairs of vertices of $G$ and a useful characterization of
  the algebraic connectivity of graphs due to Fiedler (see
  Lemma~\ref{lem:fiedler}).

  Moreover, here, we give an asymptotic lower bound for the
  maximum of $\mu(G)$ and $\mu(\bar{G})$ by proving
  the following theorem.
  \paragraph{Theorem.}For all simple graphs $G$ with $n$
  vertices,
  \begin{equation*}
    \max\{\mu(G), \mu(\bar{G}) \} \geq 1- O(n^{-\frac 13}).
  \end{equation*}
  Also, for each $n \geq 4$, there is a graph $G$ which has
  $n$ vertices {such that} the maximum of $\mu(G)$ and
  $\mu(\bar{G})$ is less than 1. So, if $c_n$ denote the
  minimum of $\max\{\mu(G), \mu(\bar{G})\}$ over all
  graphs $G$ with $n$ vertices, then $c_n=1-o(1)$.

  The organization of the remaining of the paper is as
  follows. In Section 2, we gives some preliminaries and
  notations, which is necessary. In Section 3, we present
  two main results of the paper.
  
  \section{Preliminaries and notations}
  Throughout this paper $G$ is a simple graph, with
  $n \geq 2$ vertices $V(G):=\{v_1,\ldots,v_n\}$ and edges
  $E(G)$. The notation $\{i,j\} \in E(G)$, for
  $i,j \in \{1,\ldots, n\}$, means that $v_i$ and $v_j$ are
  adjacent in $G$. Also, $A(G)$ denotes the adjacency matrix
  of $G$. $D(G)$ is the diagonal matrix of vertex degree. We
  denote the Laplacian matrix of $G$ by $L(G):= D(G) - A(G)$
  and its eigenvalues by
  $0=\mu_1(G) \leq \mu_2(G) \leq \cdots
  \leq\mu_n(G) \leq n$ and the algebraic connectivity of
  $G$ by $\mu(G) := \mu_2(G)$. $\bar{G}$ and
  $\diam(G)$, respectively, denote the complement of $G$ and
  the diameter of $G$. If $v$ is a vertex of $G$, then
  $N_G(v)$ denotes the set of vertices which are adjacent to
  $v$ in $G$. Recall that for each $1\leq i < n$, we have
  $\mu_{i+1}(G) = n-\mu_{n+1-i}(\bar{G})$.
  
  The ``effective resistance" between two vertices $v_r$ and
  $v_s$ in a graph $G$ is denoted by $R^G_{r,s}$ and defined
  by:
  \begin{equation*}
    \frac{1}{R^G_{r,s}}:=
    \min
    \sum_{\{i,j\} \in E(G)}(x_i-x_j)^2,
  \end{equation*}
  where the minimum runs over all $x \in \R^n$, with
  $x_r - x_s = 1$. 
{So for an arbitrary vector $x\in \R^n$ and $1\leq r<s\leq n$, we have
\begin{equation*}
\sum_{\{i,j\} \in E(G)}(x_i-x_j)^2\geq \frac{(x_r-x_s)^2}{R^G_{r,s}}.
\end{equation*}}
(In the nontrivial case $x_r-x_s\neq 0$, by dividing the vector $x$ by $x_r-x_s$, we can assume $x_r-x_s=1$ and use the defining formula for $R^G_{r,s}$.)

{In this paper we only use some very basic properties of the effective resistance which are listed in the following lemma:}\footnote{For more information about the effective resistance and its relation to the algebraic connectivity of graphs, see \cite{ESV}. For the equivalence of the above definition of effective resistance and the standard definition by the concepts of electrical circuits and Kirchhoff's circuit laws, see \cite{Lyashko}.}
\begin{lemma}{ \label{lem:res}
Let $G=(V,E)$ be a simple graph with $n\geq 2$ vertices. Then the following holds:}
\begin{itemize}
\item[(a)]
For every subgraph $H$ of $G$ which contains the vertices $v_1$ and $v_2$, $R^{G}_{1,2}\leq R^{H}_{1,2}$.

\item[(b)] ``Total resistance of parallel circuits": Let $G_1$ and $G_2$ be two subgraphs of $G$, such that
$V(G_1)\cap V(G_2)=\{v_1,v_2\}$ and $E(G)$ be the disjoint union of $E(G_1)$ and $E(G_2)$. Then
\[\frac{1}{R^{G}_{1,2}}=\frac{1}{R^{G_1}_{1,2}}+\frac{1}{R^{G_2}_{1,2}}.\]
\item[(c)] ``Total resistance of series circuits": For $n\geq 3$, let $G_1$ and $G_2$ be two induced subgraphs of $G$, such that $v_1\in V(G_1)$, $v_3\in V(G_2)$ and $V(G_1)\cap V(G_2)=\{v_2\}$. Suppose also that $E(G)$ be the disjoint union of $E(G_1)$ and $E(G_2)$. Then 
\[R^{G}_{1,3}=R^{G_1}_{1,2}+R^{G_2}_{2,3}\]
\end{itemize}

\end{lemma}
\begin{proof}{
The parts (a) and (b) are immediate from the defining equation of the effective resistance. For (c), we have:
\begin{eqnarray*}
\frac{1}{R^{G}_{1,3}} &=&  \min_{x\in \R^n, x_1=0, x_3=1}
    \sum_{\{i,j\} \in E(G)}(x_i-x_j)^2\\
    &=&  \min_{x\in \R^n, x_1=0, x_3=1}
    \sum_{\{i,j\} \in E(G_1)}(x_i-x_j)^2 +\sum_{\{i,j\} \in E(G_2)}(x_i-x_j)^2 \\
  &=& \min_{x_2 \in \R, x_1=0, x_3=1} \frac{(x_1-x_2)^2}{R^{G_1}_{1,2}}+ \frac{(x_2-x_3)^2}{R^{G_2}_{2,3}}\\
&=&  \min_{x_2 \in \R} \frac{x_2^2}{R^{G_1}_{1,2}}+ \frac{(x_2-1)^2}{R^{G_2}_{2,3}} \\
&=& \frac{1}{R^{G_1}_{1,2}+R^{G_2}_{2,3}}.
\end{eqnarray*}}

{(The equality of the second and the third line, uses the fact that $G_1$ and $G_2$ have only the vertex $v_2$ in common.)}
\end{proof}



  We conclude this section with a useful lemma, due to
  Fiedler.
  {
  \begin{lemma}[\cite{MR0387321}]\label{lem:fiedler}
  	Let $G =(V,E)$ be a connected graph. Then 
    $\mu(G)$ is positive and equal to the minimum of the function
     \begin{equation*}
     \varphi(x):= n \frac{\sum_{\{i,j\} \in E(G)}(x_{i}-x_{j})^2 }
     { \sum_{i<j}(x_{i}-x_{j})^{2}}
    \end{equation*}
     over all nonconstant $n$-tuples
    $\defv{x} \in \R^n$ (i.e. $n$-tuples which are not of the form $x_i = c, i = 1, \ldots, n$).
  \end{lemma}}

{Therefore  $\mu(G)$ is the largest real number for which the
    following inequality holds for every
    $\defv{x} \in \R^n$:
    \begin{equation}\label{mu}
      n \sum_{\{i,j\} \in E(G)}(x_{i}-x_{j})^2  \geq
      \mu(G) \sum_{i<j}(x_{i}-x_{j})^{2}.
    \end{equation}
}
  
  \section{Main results}
  \subsection{Sum of the algebraic connectivity of a graph
    and its complement}
  \begin{theorem}
    \label{thm:main}
    For any graph $G$ of order $n\geq 2$, the following
    holds:
    \begin{equation*}
      \mu(G) + \mu(\bar{G}) \geq 1,
    \end{equation*}
    with equality if and only if $G$ or $\bar{G}$ is
    isomorphic to the join of an isolated vertex and a
    disconnected graph of order $n-1$.
  \end{theorem}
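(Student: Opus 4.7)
The plan is to carry out the two-step reduction previewed in the introduction. First I would fix unit-norm Fiedler vectors $x,y$ of $L(G),L(\bar G)$ respectively. Since every unordered pair $\{i,j\}$ is an edge of exactly one of $G$ and $\bar G$,
\[
\lambda(G)+\lambda(\bar G)=\sum_{\{i,j\}\in E(G)}(x_i-x_j)^2+\sum_{\{i,j\}\in E(\bar G)}(y_i-y_j)^2 \;\geq\; \sum_{i<j}\min\{(x_i-x_j)^2,(y_i-y_j)^2\}.
\]
Let $M:=\max_{i,j}\max\{(x_i-x_j)^2,(y_i-y_j)^2\}$. The algebraic identity (Lemma~\ref{lem:identity}) will give $\sum_{i<j}\min\{(x_i-x_j)^2,(y_i-y_j)^2\}\geq 1/M$; I expect to prove it from the pointwise bound $\min\{a,b\}=ab/\max\{a,b\}\geq ab/M$ combined with the expansion $\sum_{i<j}(x_i-x_j)^2(y_i-y_j)^2=n\sum_i x_i^2 y_i^2+1+2\langle x,y\rangle^2\geq 1$ (which uses only $\|x\|=\|y\|=1$ and $x,y\perp \mathbf{1}$). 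Hence the case $M\leq 1$ is done immediately.

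For the remaining case $M>1$, by the symmetry $G\leftrightarrow\bar G$ I may assume $M=(y_r-y_s)^2$ for some pair $r,s$. If $\{r,s\}\notin E(G)$, the term $(y_r-y_s)^2=M$ already sits in the sum defining $\lambda(\bar G)$, and $\lambda(\bar G)\geq M>1$. Thus I may further assume $\{r,s\}\in E(G)$. The plan is then to squeeze $\lambda(\bar G)$ from below using the effective resistance: rescaling $y$ to $z:=y/(y_r-y_s)$ gives a vector with unit drop between $r,s$, and the defining minimum of $R^{\bar G}_{r,s}$ yields
\[
\lambda(\bar G)/M=\sum_{\{i,j\}\in E(\bar G)}(z_i-z_j)^2 \;\geq\; 1/R^{\bar G}_{r,s},\qquad\text{i.e.,}\qquad \lambda(\bar G)\geq M/R^{\bar G}_{r,s}.
\]
In parallel, for every common non-neighbor $v$ of $r,s$ in $G$, the edges $\{r,v\},\{s,v\}\in E(\bar G)$ contribute $(y_r-y_v)^2+(y_s-y_v)^2\geq (y_r-y_s)^2/2=M/2$ to $\lambda(\bar G)$ via $(a-b)^2+(b-c)^2\geq(a-c)^2/2$. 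Two such vertices already force $\lambda(\bar G)\geq M>1$, so I am reduced to the situation that $\{r,s\}\in E(G)$ and at most one vertex of $V\setminus\{r,s\}$ is a common non-neighbor of $r,s$ in $G$ -- in other words, all but at most one of the remaining vertices are $G$-neighbors of $r$ or of $s$.

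This residual, near-dominating configuration is the main obstacle. The structural constraint ought to force $R^G_{r,s}$ noticeably below~$1$ (through many length-$2$ paths $r\!-\!u\!-\!s$ running in parallel with the edge $rs$), and I expect to close the argument by combining that effective-resistance estimate with Fiedler's variational characterization (Lemma~\ref{lem:fiedler}) applied to a judiciously chosen test vector on $\{r,s\}$ and their $G$-neighborhoods, so that the $M>1$ surplus funneled through $R^{\bar G}_{r,s}$ exactly compensates for the $\lambda(G)<1$ deficit. Packaging this comparison into a clean inequality is the hardest step. Once the main bound is proved, the equality case should drop out by tracking tightness throughout: equality in the triangle-type bound and in $\lambda(\bar G)\geq M$ forces $\lambda(\bar G)=0$, so $\bar G$ is disconnected, while tightness on the $G$ side forces $\lambda(G)=1$; a short extremal analysis then identifies $G$ (or $\bar G$) as the join of an isolated vertex with a disconnected graph on $n-1$ vertices.
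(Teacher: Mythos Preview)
Your Step~1 (the case $M\leq 1$) agrees with the paper. The divergence, and the genuine gap, is entirely in Step~2.

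In the paper, once $M>1$ the argument pivots on Lemma~\ref{lem:diameter}: if $\lambda(G)<1$, then the vertices $v_1,v_2$ realizing the extremes of the Fiedler vector $x$ of $G$ are at distance at least $3$ in $G$; combined with the complement--diameter bound this forces $d_G(v_1,v_2)=3$. From that structural anchor the paper introduces two integer parameters---$s$, the maximum number of internally disjoint length-$3$ $v_1v_2$-paths, and $l$, a degree parameter in the resulting partition---and proves two quantitative bounds: (i) $\lambda\geq (s-1)/3+(l+1)/(l+3)$ via $R^G_{1,2}$, and (ii) $\bar\lambda\geq n/(n+2+3s^2+6sl)$ via a star decomposition of $\bar G$ fed into Lemma~\ref{lem:fiedler}. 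A short case analysis on $s\in\{1,2\}$ and $l\leq 2$ (plus a numerical check for $n\leq 11$) then finishes the proof. None of this machinery is present in your plan.

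Your route in the hard case is different and, as written, does not close. First, the reduction you reach (``$\{r,s\}\in E(G)$ and at most one common $\bar G$-neighbor'') is strictly weaker than the paper's $d_{\bar G}(r,s)=3$, because you never invoke an analogue of Lemma~\ref{lem:diameter}. Second, and more seriously, the final paragraph mixes up which effective resistance helps which eigenvalue. Since $r,s$ are extremal for $y$ (the Fiedler vector of $\bar G$) and you have no control on $x_r-x_s$, the bound $\lambda(G)\geq (x_r-x_s)^2/R^G_{r,s}$ is useless; showing $R^G_{r,s}<1$ gives no lower bound on $\lambda(G)$. Third, the claim that there are ``many length-$2$ paths $r\!-\!u\!-\!s$'' in $G$ is unjustified: your reduction says $N_G(r)\cup N_G(s)$ is almost all of $V$, which says nothing about $N_G(r)\cap N_G(s)$. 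Without a concrete test vector and a concrete resistance estimate in the \emph{correct} graph, the ``surplus funneled through $R^{\bar G}_{r,s}$'' heuristic is only a hope, not an argument. Finally, the equality discussion is garbled: ``$\lambda(\bar G)\geq M$ forces $\lambda(\bar G)=0$'' cannot be what you mean; in the paper the equality case is handled separately via the disconnected case (Lemma~\ref{lem:1}), and in the connected case the proof always yields a strict inequality.
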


  Note that for any connected graph $G$, $\mu(G)$ is a
  positive number. So, when $G$ is a disconnected graph,
  Theorem 1 can be reformulated as the following lemma:
  \begin{lemma}\label{lem:1}
    If $G$ is a disconnected graph, then
    $\mu(\bar{G}) \geq 1$, with equality if and only if
    $\bar{G}$ is the join of an isolated vertex and a
    disconnected graph.
  \end{lemma}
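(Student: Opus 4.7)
The approach is to compare $\bar G$ to a complete multipartite graph and then exploit monotonicity of $\lambda_2$ under edge addition. Let the components of $G$ have sizes $n_1 \geq n_2 \geq \cdots \geq n_k$, with $k \geq 2$. Since every pair of vertices lying in different components of $G$ is adjacent in $\bar G$, the graph $\bar G$ contains the complete multipartite graph $H := K_{n_1,\ldots,n_k}$ as a spanning subgraph.

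The key computation is $\lambda(H) = n - n_1$. To derive this, note that $\bar H = K_{n_1}\cup\cdots\cup K_{n_k}$ has Laplacian spectrum consisting of $0$ (multiplicity $k$) together with each $n_i$ (multiplicity $n_i-1$), as the spectrum of $K_m$ is $\{0, m^{(m-1)}\}$. Applying the complement relation $\lambda_{i+1}(H) = n - \lambda_{n+1-i}(\bar H)$ recalled in the preliminaries, the non-zero Laplacian eigenvalues of $H$ turn out to be $n$ (multiplicity $k-1$) and $n - n_i$ (multiplicity $n_i - 1$) for each $i$. Thus $\lambda(H) = n - n_1$, and since $k \geq 2$ forces $n_1 \leq n-1$, we get $\lambda(H) \geq 1$. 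Monotonicity of $\lambda_2$ under addition of edges, which is immediate from the variational characterization in Lemma~\ref{lem:fiedler}, then gives
\[
  \lambda(\bar G) \;\geq\; \lambda(H) \;\geq\; 1.
\]

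For the equality case, suppose $\lambda(\bar G) = 1$. Because $1 = \lambda(\bar G) \geq \lambda(H) \geq 1$, we must have $\lambda(H) = 1$, whence $n_1 = n-1$ and $G$ consists of an isolated vertex $v$ together with a single component on $n-1$ vertices. Writing $D := \overline{G - v}$, one checks directly that $\bar G = \{v\} * D$. The standard spectral formula for a join shows that the non-trivial Laplacian eigenvalues of $\{v\} * D$ are $n$ together with $1 + \lambda_i(D)$ for $i = 2,\ldots,n-1$, so $\lambda(\bar G) = \min(n,\, 1 + \lambda(D))$. Since $n \geq 2$, this equals $1$ if and only if $\lambda(D) = 0$, i.e.\ if and only if $D$ is disconnected. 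Conversely, the same formula shows that whenever $\bar G = \{v\} * D$ with $D$ disconnected, we have $\lambda(\bar G) = 1$, giving the desired characterization.

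The main obstacle I anticipate is the spectral identity for $H$ and for the join $\{v\}*D$: both follow quickly from $L(G) + L(\bar G) = nI - J$ together with a short block-matrix argument, but they are the only non-trivial ingredient beyond monotonicity of $\lambda_2$. Everything else is either given by Lemma~\ref{lem:fiedler} or by the component-count interpretation $\lambda(D) = 0 \iff D$ disconnected.
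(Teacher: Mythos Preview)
Your argument is correct. It differs from the paper's proof in structure, though the underlying spectral facts are the same. The paper works directly with $G$: since the Laplacian spectrum of a disconnected graph is the union of the spectra of its components, and each component has at most $n-1$ vertices, one gets $\lambda_n(G)\le n-1$ and hence $\lambda(\bar G)=n-\lambda_n(G)\ge 1$; the equality case is then read off from the same identity applied once more to the $(n-1)$-vertex component. You instead pass through the spanning complete multipartite subgraph $H=K_{n_1,\ldots,n_k}\subseteq \bar G$, compute $\lambda(H)=n-n_1$ via the complement relation, and invoke monotonicity of $\lambda_2$ under edge addition; for equality you appeal to the join spectrum of $\{v\}*D$. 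Both routes are short and rely on the same complement identity $\lambda_{i+1}(G)=n-\lambda_{n+1-i}(\bar G)$; the paper's is marginally more direct (no monotonicity or join formula needed), while yours makes the extremal multipartite structure explicit and uses only textbook facts about $K_{n_1,\ldots,n_k}$ and joins.
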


  \begin{proof}
    We know that {the biggest Laplacian eigenvalue of every graph is at most the number of its vertices} and  the  Laplacian eigenvalues of $G$ are
    the union of the eigenvalues of its connected components,
    where each of which has less than $n$ vertices. So, {$\mu_n(G) \leq n - 1$ and} 
    $\mu(\bar{G}) = n - \mu_n(G) \geq 1$, with
    equality if and only if $G$ is the disjoint union of an
    isolated vertex and a connected graph $H$ with $n-1$
    vertices, where $\mu_{n-1}(H) = n-1$. But, note that
    $\mu_{n-1}(H) = n-1 - \mu_2(\bar{H})$. So, we
    have $\mu_{n-1}(H) = n-1$, if and only if $\bar{H}$
    is disconnected. Therefore, we have equality
    $\mu(\bar{G}) = 1$, if and only if $\bar{G}$ is the
    join of an isolated vertex and a disconnected graph
    $\bar{H}$.
  \end{proof}
  
  \begin{lemma}
    \label{lem:identity} Let $x = (x_{1}, \ldots, x_{n})$
    and $y = (y_{1}, \ldots, y_{n})$ be two vectors in
    $\mathbb{R}^{n}$ with $\sum_{i}y_{i} = \sum_{i}x_{i}=0$.
    Then
    \begin{equation*}
      \sum_{i<j}(x_i-x_j)^2(y_i-y_j)^2\geq \|x\|^2\|y\|^2.
    \end{equation*}
  \end{lemma}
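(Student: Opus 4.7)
The plan is to prove an explicit identity from which the stated inequality is an immediate weakening. Because the left-hand side is symmetric under swapping the two indices, I would first write
\begin{equation*}
  \sum_{i<j}(x_i-x_j)^2(y_i-y_j)^2 \;=\; \frac{1}{2}\sum_{i,j=1}^{n}(x_i-x_j)^2(y_i-y_j)^2.
\end{equation*}
Expanding the two squared factors produces nine monomials in $x_i,x_j,y_i,y_j$. Summing each over all ordered pairs $(i,j)$, the zero-sum hypotheses $\sum_i x_i = \sum_i y_i = 0$ annihilate every monomial in which one of the factors $x_i,x_j$ (respectively $y_i,y_j$) appears to the first power while the other $x$-factor (respectively $y$-factor) is absent, since such a double sum factors as a product one of whose factors is $\sum_k x_k$ or $\sum_k y_k$.

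Collecting the three surviving contributions, I expect to obtain the identity
\begin{equation*}
  \sum_{i<j}(x_i-x_j)^2(y_i-y_j)^2 \;=\; n\sum_{i=1}^{n} x_i^2 y_i^2 \;+\; \|x\|^2\|y\|^2 \;+\; 2\Bigl(\sum_{i=1}^{n} x_i y_i\Bigr)^{\!2}.
\end{equation*}
Since $n\ge 2$ and both of the extra terms on the right are manifestly nonnegative, the desired bound $\|x\|^2\|y\|^2$ drops out with no further work. (If the equality case were later needed, the identity also pins it down: one must have $x_i y_i = 0$ for every $i$ and $\langle x,y\rangle = 0$, i.e.\ the supports of $x$ and $y$ must be disjoint.)

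The only real obstacle is the bookkeeping in the nine-term expansion: the zero-sum hypotheses are precisely strong enough to kill the six monomial sums that should vanish, so the arithmetic is routine once it is organised, but miscounting a $2$ or a $4$ would spoil the identity. Should a tidier derivation be preferred, one may recognise the claim as a Lagrange-type expansion of a quadratic form attached to the Laplacian of the weighted complete graph with edge weights $(x_i-x_j)^2(y_i-y_j)^2$; however, the direct computation is already short and self-contained, and I would prefer it here.
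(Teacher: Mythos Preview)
Your approach is essentially the same as the paper's: both symmetrise to a full double sum, expand the product of the two squares, use the zero-sum hypotheses to kill the cross terms, and read off the inequality from the surviving nonnegative terms. Your identity
\[
\sum_{i<j}(x_i-x_j)^2(y_i-y_j)^2 \;=\; n\sum_{i} x_i^2 y_i^2 \;+\; \|x\|^2\|y\|^2 \;+\; 2\Bigl(\sum_{i} x_i y_i\Bigr)^{2}
\]
is correct and in fact sharper than what the paper records (the paper writes the first term as $\sum_i x_i^2 y_i^2$ rather than $n\sum_i x_i^2 y_i^2$, an undercount that is harmless for the inequality but misses a factor of $n$ in the exact identity); your version also identifies the equality case, which the paper does not discuss.
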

  \begin{proof}
    Denote $A=\sum_{i<j}(x_i-x_j)^2(y_i-y_j)^2$. We have
{    
\begin{eqnarray*}
      2A &=& \sum_{i\neq j}(x_i-x_j)^2(y_i-y_j)^2 \\
          &=& \sum_{i, j}(x_i-x_j)^2(y_i-y_j)^2  \quad \mbox{(The terms corresponding to values $i=j$ are zero)} \\
          &=& \sum_{i,j}(x_{i}^{2}+x_{j}^2-2x_ix_j)(y_{i}^2+y_{j}^{2}-2y_iy_j)     \\
         &=&  \sum_{i,j} (x_i^2y_i^2+x_j^2y_j^2) +  \sum_{i,j} (x_i^2y_j^2+x_j^2y_i^2)+ 4\sum_{i,j} x_ix_jy_iy_j \\
         && -2\sum_{i,j} \big( x_i^2y_iy_j+x_j^2y_iy_j+x_ix_jy_i^2+x_ix_jy_j^2 \big) \\
        &=&  2\sum_{i,j} x_i^2y_i^2+  2\sum_{i,j} x_i^2y_j^2+ 4\sum_{i,j} x_ix_jy_iy_j  -4\sum_{i,j} \Big( x_i^2y_iy_j+x_ix_jy_i^2 \Big) \\
&=& 2n\sum_i x_i^2y_i^2 +2\Big(\sum_ix_{i}^2\Big)\Big(\sum_j y_{j}^2\Big)+4\Big(\sum_i x_iy_i \Big)\Big(\sum_jx_jy_j\Big) \\
        && -4  \Big(\sum_i x_i^2y_i\Big)\Big(\sum_jy_j\Big) -4\Big(\sum_i x_iy_i^2\Big)\Big(\sum_jx_j\Big) \\
         &=&2n\sum_{i}x_i^2y_i^2+2\Big(\sum_ix_{i}^2\Big)\Big(\sum_iy_{i}^2\Big)+
             4\Big(\sum_{i}x_iy_i\Big)^2 \quad {(\mbox{by $\sum_j x_j=\sum_j y_j=0$})}\\
         & \geq& 2 \|x\|^2\|y\|^2.
    \end{eqnarray*}
}
  \end{proof}
  \begin{lemma}\label{lem:diameter}
    Suppose that $G$ is a connected graph with $n\geq2$
    vertices $\{v_1, \ldots, v_n\}$. Let
    $x=(x_1,\ldots,x_n)$ be an {eigenvector} of $L(G)$
    corresponding to $\mu_2(G)$. If
    \begin{equation*}
      x_1=\max_ix_i, \quad  x_2=\min_ix_i,
    \end{equation*}
    and the distance between $v_1$ and $v_2$ is at most
    equal to $2$, then $\mu(G) \geq 1$. In particular,
    for any graph with diameter less than $3$, we have
    $\mu(G) \geq 1$.
  \end{lemma}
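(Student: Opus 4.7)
The plan is to apply the eigenvalue equation $L(G)x = \lambda(G)\,x$ coordinatewise at the extremal vertices $v_1$ and $v_2$. Writing $\lambda := \lambda(G)$, the $r$th row of this equation is
\begin{equation*}
\lambda\, x_r \;=\; \sum_{j \,:\, \{j,r\}\in E(G)} (x_r - x_j) \qquad \text{for every } 1\le r\le n.
\end{equation*}
Since $\lambda_2$-eigenvectors of the Laplacian are orthogonal to the all-ones vector, $\sum_i x_i = 0$, and because $x \neq 0$ this forces $x_1 > 0 > x_2$; in particular $v_1 \neq v_2$ and $x_1 - x_2 > 0$.

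The key observation is a one-term lower bound on the above sum. If $v_k$ is any neighbor of $v_1$, then every term $x_1 - x_j$ is nonnegative (because $x_1$ is the maximum), so discarding all terms except $j=k$ gives $\lambda\, x_1 \geq x_1 - x_k$. Analogously, for any neighbor $v_k$ of $v_2$, $-\lambda\, x_2 \geq x_k - x_2$.

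I then split on the distance $d(v_1,v_2)\in\{1,2\}$. If $d(v_1,v_2)=2$, pick a common neighbor $v_k$; summing the two bounds gives $\lambda(x_1-x_2) \geq x_1 - x_2$, hence $\lambda \geq 1$. If $v_1 \sim v_2$, apply the first bound with $k=2$ and the second with $k=1$; summing yields $\lambda(x_1-x_2) \geq 2(x_1-x_2)$, giving even $\lambda \geq 2$. For the ``in particular'' clause, a connected graph of diameter less than $3$ has every pair of distinct vertices at distance at most $2$, so the hypothesis on $v_1,v_2$ is automatic.

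There is no serious obstacle in this argument; the only points that need a brief check are that $x$ is nonconstant (so that $v_1 \neq v_2$ and one may divide by $x_1-x_2$), and that in the bound $\lambda x_1 \geq x_1 - x_k$ all discarded summands are genuinely nonnegative, which is exactly what the extremality of $x_1$ (respectively $x_2$) provides.
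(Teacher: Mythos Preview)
Your argument is correct and is essentially the same as the paper's: both apply the eigenvalue equation at the extremal vertices and use that every summand $x_1-x_j$ (respectively $x_j-x_2$) is nonnegative to isolate a single neighbor. The only cosmetic difference is that the paper argues by contrapositive (if $\lambda<1$ then neighbors of $v_1$ have positive coordinate and neighbors of $v_2$ have negative coordinate, so the distance exceeds $2$), whereas you argue directly by cases on $d(v_1,v_2)\in\{1,2\}$ and in the adjacent case obtain the slightly sharper $\lambda\ge 2$.
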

  \begin{proof}
    Since $x$ is orthogonal to the constant-entries vector
    $(1,\ldots,1)$, $\sum_ix_i=0$. So, $x_2<0<x_1$. Now,
    from $L(G)x = \mu x$, we get
    $\mu x_1=\sum_{\{i,1\}\in E(G)}(x_1-x_i)$. But, for
    each $i$, $x_1-x_i\geq 0$. Thus, for every
    $v_i\sim v_1$, we have $x_1-x_i\leq \mu x_1$, or
    equivalently, $x_i\geq (1-\mu)x_1$. Similarly, for
    every $v_j\sim v_2$, we have $x_j \leq (1-\mu)x_2$.
    Therefore, if $\mu<1$, for every $v_i \sim v_1$ and
    $v_j \sim v_2$ we have $x_j< 0 < x_i$. So $v_1$ and
    $v_2$ are not adjacent and have no common neighbor.
    {Thus,} the distance between $v_1$ and $v_2$ is greater
    than $2$.
  \end{proof}
  
  \paragraph{Proof of Theorem 1.}
  According to Lemma \ref{lem:1}, we can suppose that both
  $G$ and $\bar{G}$ are connected. {So, we have $n>2$}. Denote $\mu_2(G)$ and
  $\mu_2(\bar{G})$, respectively by $\mu$ and
  $\bar{\mu}$. Let $\defv{x}$ be a {normalized} eigenvector of
  $L(G)$ corresponding to $\mu_{2}(G)$ and $\defv{y}$ be
  a {normalized} eigenvector of $L(\bar{G})$ corresponding to
  $\mu_2(\bar{G})$. Note that $y$ is also an eigenvector
  of $L(G)$ corresponding to $\mu_n(G)$. So $x$ and $y$
  are two orthonormal vectors in $\R^n$ which are orthogonal
  to $e = (1,\ldots,1)$ (the eigenvector of
  $\mu_1(G) = 0$). Now, without loss of generality, we
  can suppose that
  $\max_{i<j}|x_i-x_j| \geq \max_{i<j}|y_i -y_j|$ and
  \begin{equation*}
    x_1=\max_{1\leq i \leq n} x_i, \quad x_2= \min_{1\leq i \leq n} x_i.
  \end{equation*}
  Note that since $\sum_i x_i=0$ and $x$ is nonzero, we have
  $x_2<0<x_1$.
  \paragraph{Step 1 (the case $x_1 - x_2 < 1$).}
  We have
  \begin{eqnarray*}
    \mu + \bar{\mu} &=& \sum_{\{i,j\} \in E(G)}(x_i-x_j)^2+\sum_{\{i,j\}\in E(\bar{G})}(y_i-y_j)^2\\
                            &\geq& \sum_{i<j}\min\{(x_i-x_j)^2, (y_i-y_j)^2\} \\
                            &\geq& \frac{\sum_{i<j}(x_i-x_j)^2 (y_i-y_j)^2}{\max_{i<j} \max \{(x_i-x_j)^2, (y_i-y_j)^2\}} \\
                            &\geq& \frac{ \|x \|^2 \| y \|^2}{(x_{1}-x_2)^2}
                            \qquad \mbox{(by Lemma \ref{lem:identity})} \\
                            &=& \frac{1}{(x_{1}-x_2)^2}. \qquad \mbox{{(since  $x$ and $y$ are normalized)}}
  \end{eqnarray*}
  The first inequality is implied by the fact that, for every pair  $i<j$, $\{i,j\}$ is an edge for exactly one of $G$ or $\bar{G}$. For the second inequality, note that:
  \begin{eqnarray*}
   (x_i-x_j)^2(y_i-y_j)^2  \!\!\!\!\! &=& \!\!\!\min\{(x_i-x_j)^2, (y_i-y_j)^2\}\max\{(x_i-x_j)^2, (y_i-y_j)^2\} \\
   &\leq & \!\!\!\! \min\{(x_i-x_j)^2, (y_i-y_j)^2\}{\max_{k<l} \max \{(x_k-x_l)^2, (y_k-y_l)^2\}}.
  \end{eqnarray*}
  
    Thus, in the case $x_1 - x_2 < 1$, we have
  $\mu + \bar{\mu} > 1$ and the proof is complete.
  So, we can suppose that $x_1-x_2 \geq 1$.

  \paragraph{Step 2 (the case $x_1-x_2 \geq 1$).}
  Since $\mu$ and $\bar{\mu}$ are positive, if one
  of them is greater than or equal to $1$ then
  $\mu + \bar{\mu} > 1$. So, we can suppose that
  $\mu, \bar{\mu} < 1$. Let $d$ be the distance
  between $v_1$ and $v_2$ in $G$.

  \paragraph{Step 2.1 ($d \leq 2$ or $d > 3$).} If
  $d \leq 2$ then, by Lemma \ref{lem:diameter}, we have
  $\mu \geq 1$. On the other hand, if $d > 3$ then
  $\diam(G) > 3$. So the diameter of $\bar{G}$ is at most
  equal to $2$. Thus, by Lemma \ref{lem:diameter},
  $\bar{\mu}\geq 1$. Therefore, we can suppose that
  $d=3$.

  \paragraph{Step 2.2 ($d = 3$).} Now, suppose that
  $s \geq 1$ is the maximum number of {vertex-}disjoint paths with
  length 3 between two vertices $v_1,v_2$ in $G$.

  Let $S$ be the union of the vertices of $s$ disjoint paths
  between $v_1$ and $v_2$ with length $3$. Denote
  $S_1:=S\cap N_G(v_2)$ and $S_2:=S\cap N_G(v_1)$ (see Figure \ref{fig:G}). Note that
  because $v_1$ and $v_2$ have no common neighbor in $G$,
  the vertices of $S_1$ are not adjacent to $v_1$ and the
  vertices of $S_2$ are not adjacent to $v_2$. Also,
  $|S_1|=|S_2|=s$ and $S = S_1 \cup S_2 \cup\{v_1,v_2\}$.
  Therefore, if we denote $A := N_G(v_1)\setminus S$,
  $B := N_G(v_2)\setminus S,$ and
  $C:=V(G) \setminus (A\cup B \cup S)$ then
  $\{A, B, C, S_1, S_2, \{v_1,v_2\}\}$ is a partition of the
  vertices of $G$ and because {by the maximality of $s$,} there is no path of length {at most} 3
  between $v_1$ and $v_2$ in $G\setminus (S_1\cup S_2)$, no
  vertex of $A\cup\{v_1\}$ is adjacent to any of the
  vertices of $B\cup\{v_2\}$. 
  \begin{figure}[h!]
    \centering
    \includegraphics[width=.7\linewidth]{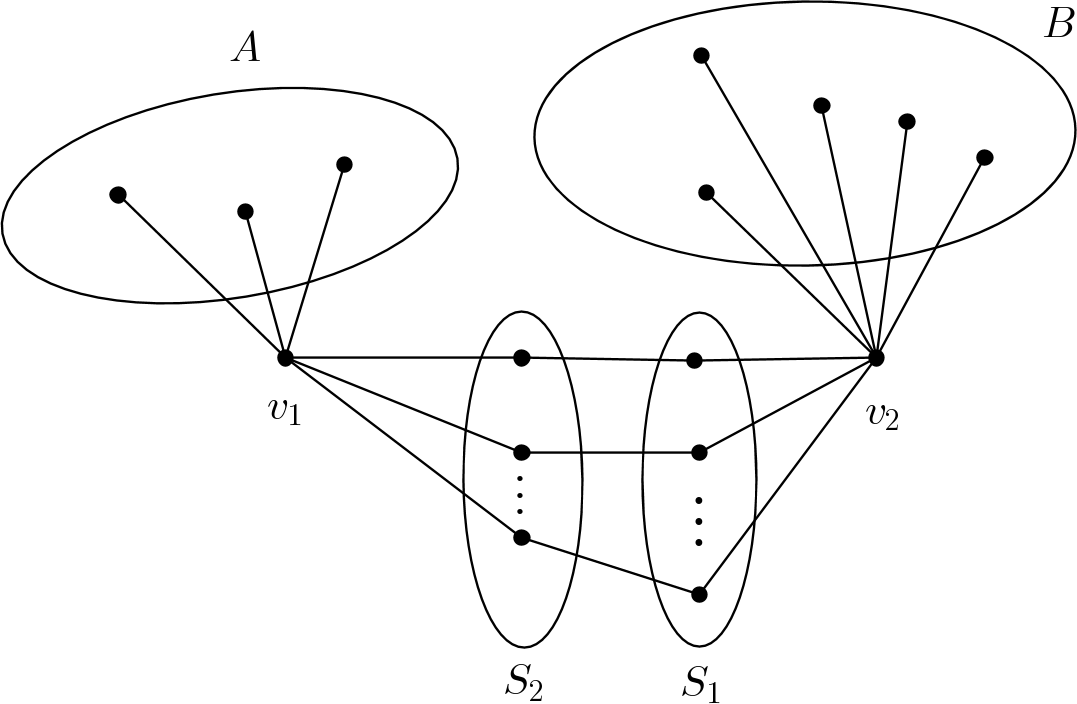}
    \caption{The subsets $S_1, S_2, A$ and $B$  of the vertices of $G$. }
    \label{fig:G}
  \end{figure}

Now, if we let
  $a := |A|, b := |B|$ and $c := |C|$, then
{\begin{equation}\label{n}
n = a + b + c + 2s+2.
\end{equation}}
 Furthermore we can suppose that
  $a \leq b$.

    Suppose that $l$ is the maximum of $|N_G(v) \cap A |$ for
  all $v\in S_1$ and $|N_G(u) \cap B |$ for all $u\in S_2$.
  So $G$ contains a subgraph $G_1$, as illustrated in Figure
  \ref{fig:L}. We have
 \[\mu \geq \frac{(x_1-x_2)^2}{R^G_{1,2}} \geq \frac{1}{R^G_{1,2}}.\]
 
 \begin{figure}[]
    \centering
    \includegraphics[width=.5\linewidth]{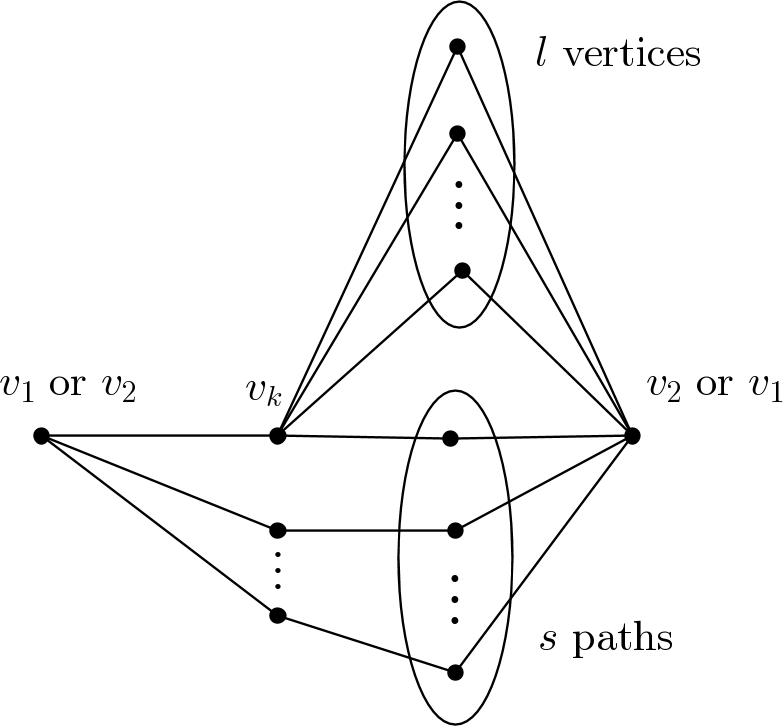}
    \caption{$G_1$: A subgraph of $G$.}
    \label{fig:L}
  \end{figure}
  
{ By part (a) of Lemma \ref{lem:res}, $R^G_{1,2}\leq R^{G_1}_{1,2}$. By multiple usage of the rules of total resistance for parallel and series circuits in Lemma \ref{lem:res}, 
we have:}
\[ \frac{1}{R^{G_1}_{1,2}}=\frac{1}{1+\frac{1}{\underbrace{\frac{1}{2} + \cdots + \frac{1}{2}}_{l+1 \text{ times}}}}  +\underbrace{\frac{1}{3} + \cdots + \frac{1}{3}}_{s-1 \text{ times}} +
    =\frac{1}{1+\frac{2}{l+1}}+\frac{s-1}{3} = \frac{l+1}{l+3}+\frac{s-1}{3} .\]
{(The terms $\frac{1}{2}$ correspond to $l+1$ parallel paths of length $2$ between $v_k$ and $v_2$ or $v_1$ in $G_1$.  Also the terms $\frac{1}{3}$ correspond to $s-1$ parallel paths
 of length $3$  not including $v_k$  between $v_1$ and $v_2$ in $G_1$, as in the Figure  \ref{fig:L}).}

{Finally we have a lower bound for $\mu$ in terms of $s$ and
  $l$:}
 \begin{equation}
    \label{eq:9}
    \mu \geq \frac{s-1}{3} + \frac{l+1}{l+3}.
  \end{equation}

  Next, we give a lower bound for $\bar{\mu}$. Note that
  $\bar{G}$ contains the edges of the graph illustrated in
  Figure \ref{fig:Gbar}, {which hereafter is called $G_2$}. Also, according to the definition
  of $l$, {each vertex of $S_1$ are adjacent to at most $l$ vertices of $A$   in $G$ and 
each vertex of $S_2$ are adjacent to at most $l$ vertices of $B$  { in $G$}.} 

\begin{figure}[h!]
    \centering \includegraphics[width=.7\linewidth]{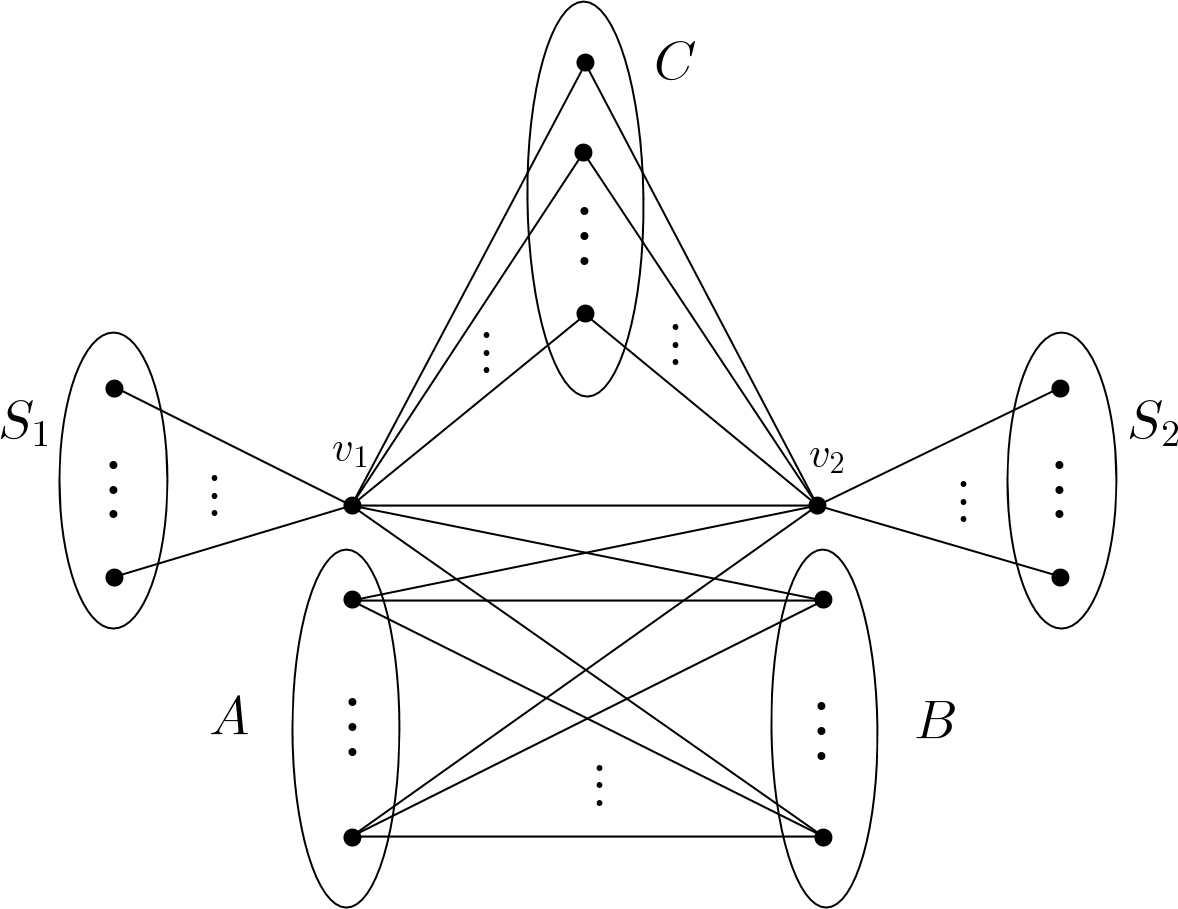}
    \caption{$G_2$: A subgraph of $\bar{G}$.}
    \label{fig:Gbar}
  \end{figure}    

Now,
  we define the subgraphs $H_1, H_2$ and $H_3$ of $\bar{G}$
  as follows: (see figure \ref{H})

\begin{figure}[h!]

    \centering \includegraphics[width=1.1\linewidth]{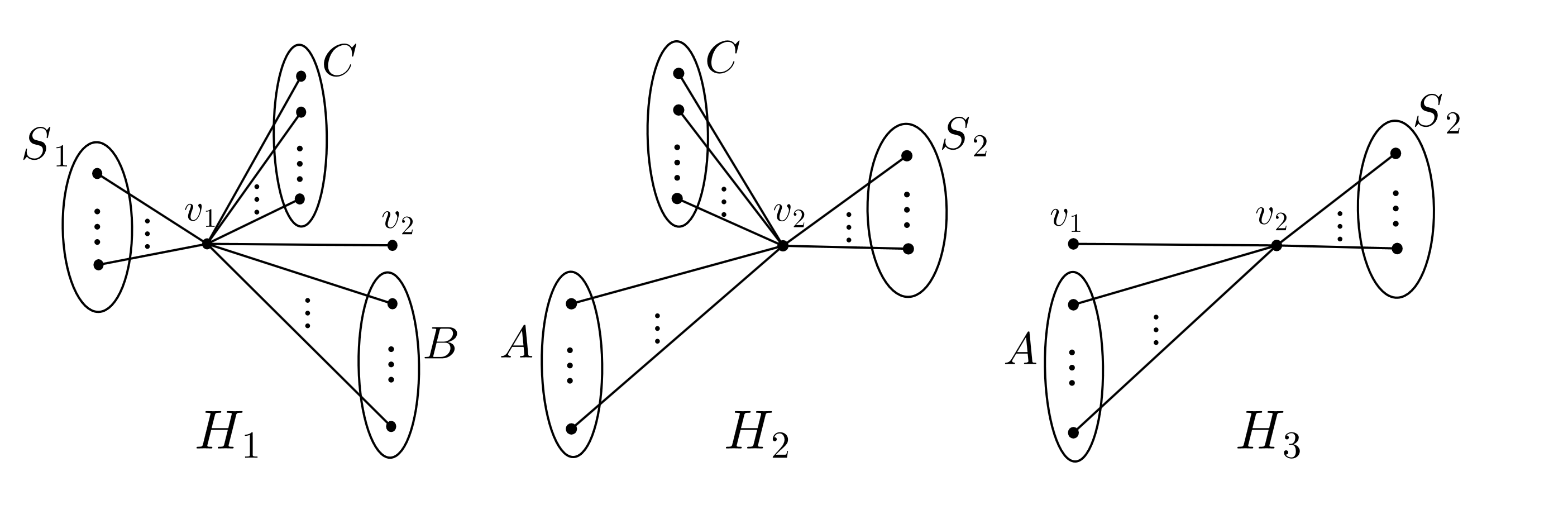}
    \caption{Graphs $H_1, H_2 $ and $H_3$.}
    \label{H}
  \end{figure}

  \begin{enumerate}
  \item $H_1$ is the union of the edges which join $v_1$ to
    the vertices of $\{v_2\} \cup B \cup C \cup S_1$,
  \item $H_2$ is the union of the edges which join $v_2$ to
    the vertices of $A \cup C \cup S_2$,
  \item $H_3$ is the union of the edges which join $v_2$ to
    the vertices of $\{v_1\} \cup A \cup S_2$.
  \end{enumerate}
  Next, note that each of $H_1, H_2,$ and $H_3$ is a star
  graph. Thus, $\mu(H_i) = 1,$ for $i=1,2,3.$ So, {if we define
\begin{eqnarray*}
X&=& \{(i,j):  (v_i,v_j) \in \mathrm{V}(H_1)^2\cup \mathrm{V}(H_2)^2\cup \mathrm{V}(H_3)^2\} \\
  &=& \{(i,j):  (v_i,v_j), (v_j,v_i) \not\in (S_1\times S_2)  \cup (S_1 \times A)   \cup (S_2  \times B) \cup (A\times B) \},
\end{eqnarray*}}
by  inequality \eqref{mu}, one can easily verify that
  \begin{equation}
    \label{eq:2}
    \begin{split}
       \sum_{\substack{(i,j) \in X\\
          i<j}}(y_i-y_j)^2 \quad \leq\quad  & (b+c+s+2) \sum_{\{i,j\} \in E(H_1)}(y_i-y_j)^2\  \\
      &+(a+c+s+1)\sum_{\{i,j\}\in E(H_2)}(y_i-y_j)^2\  \\
      &+(a+s+2)\sum_{\{i,j\}\in E(H_3)}(y_i-y_j)^2\ .
    \end{split}
  \end{equation}

  Since the resistance of a path with length $3$ is equal to
  $3$, we have
  \begin{align}
    \label{eq:3}
    \sum_{\substack{v_i \in S_1\\ v_j \in S_2}} (y_i-y_j)^2
    &\leq
      3 \sum_{\substack{v_i\in S_1\\ v_j \in S_2}}
    \big((y_i-y_1)^2 +(y_1-y_2)^2+(y_2-y_j)^2\big) \notag \\
    &=
      3s\sum_{v_i\in S_1}(y_1-y_i)^2+3s\sum_{v_j\in S_2}(y_2-y_j)^2+3s^2(y_1-y_2)^2.
  \end{align}
  Also, if we define $A_i:=A\setminus N_{\bar{G}}(v_i)$, for
  each $v_i \in S_1$, then $|A_i| \leq l$, for each
  $v_i\in S_i$, and
  \begin{equation}
    \label{eq:4}
    \begin{split}
      \sum_{v_i\in S_1}\sum_{v_j\in A_i}(y_i-y_j)^2 \ & \leq
      3\sum_{v_i\in S_1}\sum_{v_j\in A_i}
      \big( (y_i-y_1)^2 + (y_1-y_2)^2 + (y_2-y_j)^2 \big) \\
      & \leq 3sl \Big( \sum_{v_i\in S_1}(y_i-y_1)^2 +
      \sum_{v_j\in A}(y_2-y_j)^2 + (y_1-y_2)^2\Big).
    \end{split}
  \end{equation}
  Similarly, if we define
  $B_i := B \setminus N_{\bar{G}}(v_i)$, for each
  $v_i\in S_2$, we have
  \begin{equation}
    \label{eq:5}
    \sum_{\substack{v_i\in S_2\\v_j\in B_{i}}}(y_i-y_j)^2
    \leq 3sl \Big(
    \sum_{v_i\in S_2}(y_i-y_2)^2 +
    \sum_{v_j\in B}(y_1-y_j)^2 +
    (y_1-y_2)^2 \Big).
  \end{equation}
  On the other hand, for all pairs $(i,j)$ such that
  $v_i \in S_1$ and $v_j \in A \setminus A_i$, or
  $v_i \in S_2$ and $v_j \in B \setminus B_i$, or $v_i\in A$
  and $v_j \in B$, we have the trivial inequality
  \begin{equation}
    \label{eq:6}
    (y_i-y_j)^2 \leq (y_i-y_j)^2.
  \end{equation}

  Now, note that for all of the terms $(y_i-y_j)^2$ which
  appear in the {right} hand side of the
  inequalities~\eqref{eq:2},\eqref{eq:3},\eqref{eq:4},\eqref{eq:5},
  and \eqref{eq:6}, we have $\{i,j\}\in E(\bar{G})$, and on the
  other hand, for each $i<j$, one of $(y_i-y_j)^2$ or
  $(y_j-y_i)^2$ appears in the {left} hand side of one of
  these inequalities. Since $H_1$ and $H_2$ have no common
  edges, $b + c + s + 2 \geq a + c + s + 1$, and the terms
  $(y_i-y_j)^2$ which appear in \eqref{eq:6} does not appear
  in other inequalities, by summing up both sides of these
  inequalities, we will get
\begin{eqnarray}
    \label{eq:7}
        \sum_{i<j}(y_i-y_j)^2  
     & \leq&  \Big( (b+c+s+2) + (a+s+2) +3s^2  + 3sl + 3sl \Big) \nonumber \\
       && \times \sum_{\{i,j\}\in E(\bar{G})}(y_i-y_j)^2 \nonumber\\
       &=& (n + 2 + 3s^2 + 6sl)\sum_{\{i,j\}\in
        E(\bar{G})}(y_i-y_j)^2. \qquad{\mbox{(by \eqref{n})}}
 \end{eqnarray}
 
  { But $y=(y_1,\dots,y_n)$ is
  an eigenvector of $L(\bar{G})$ corresponding to
  $\bar{\mu}$ with $\sum_{i} y_i=0$,  so have
  $$\sum_{\{i,j\} \in E(\bar{G})}(y_i-y_j)^2 = y^T L(\bar{G}) \, y=\bar{\mu} \|y\|^2=  \frac{\bar{\mu}}{n} \sum_{i < j}(y_i-y_j)^2.$$}

 Therefore
  \begin{equation}
    \label{eq:8}
    \bar{\mu} \geq \frac{n}{n+2+3s^2+6sl}.
  \end{equation}
  \paragraph{A lower bound for $\mu + \bar{\mu}$.}
  Therefore, by inequalities \eqref{eq:9} and \eqref{eq:8},
  \begin{equation}
    \label{eq:10}
    \mu + \bar{\mu} \geq \frac{n}{n+2+3s^2+6sl}+
    \frac{s-1}{3} + \frac{l+1}{l+3}.
  \end{equation}

  Now by inequality \eqref{eq:9}, for $s \geq 3$, we have
  $\mu \geq 1$. So we can suppose that
  $s= 1\,\text{or}\, 2$.

  \paragraph{The case $ s = 1$.} In this case,
  \begin{equation*}
    \mu + \bar{\mu} \geq \frac{l+1}{l+3} + \frac{n}{n+5+6l}.
  \end{equation*}
  Thus, for each $n \geq 12$,
  \begin{equation*}
    \mu + \bar{\mu} \geq  1 - \frac{2}{l+3} + \frac{12}{17+6l}>1.
  \end{equation*}

  \paragraph{The case $s=2$.} Similarly, in this case,
  \begin{equation*}
    \mu + \bar{\mu} \geq \frac 13 + \frac{l+1}{l+3} + \frac{n}{n+14+12l}.
  \end{equation*}
  Note that, when $l \geq 3$,
  $\mu \geq \frac 13 + \frac{l+1}{l+3} \geq 1$. So,
  $\mu + \bar{\mu} > 1$. Also, for each of cases
  $l=0,1,2$, when $n \geq 10$,
  \begin{align*}
    l=0: & \qquad \mu + \bar{\mu} \geq \frac 23 + \frac{10}{24}  > 1, \\
    l=1: & \qquad \mu + \bar{\mu}  \geq \frac 13 +
           \frac 12 + \frac{10}{36}  > 1, \\
    l=2: & \qquad \mu + \bar{\mu} \geq \frac 13 + \frac 35 + \frac{10}{48} > 1.
  \end{align*}

{For all of the remaining cases,  the theorem can be checked numerically as follows: }

\begin{itemize}
\item {$n\leq 4$. The only connected graph $G$ with connected complement, is $P_4$ the path graph of length $3$. In this case:
$$\mu + \bar{\mu}=2\mu(P_4) = 2(2-\sqrt{2})>1.$$}

\item {$s=2, n\leq 9$. $G_2$ is a subgraph of $\bar{G}$ and so $\mu(G_2)\leq
\bar{\mu}$. The graph $G_2$ only depends on the nonnegative integers $a,b,c$ with constraints $s=2$ and $a+b+c+2s+2=n\leq 9$ (at most $n^3$ cases for each $n$). The minimum of the values of $\mu(G_2)$ in all of these cases is approximately $0.357$, and so by  \eqref{eq:9} we have
$$ \mu+\bar{\mu} > \frac{2}{3}+ 0.35 >1.$$}

\item {$s=1, \; 5\leq n\leq 11$. In these cases also the values of the second Laplacian eigenvalues of all graphs $G_2$ can be computed numerically and the minimum is approximately $0.429$. So again by \eqref{eq:9}, for $l\geq 2$ we have
$$\mu+\bar{\mu} > \frac{l+1}{l+3}+ 0.42 >1. $$}

{For the remaining cases $l=0,1$, $|S_1|=|S_2|=s=1$ and note that by the definition of $l$, the vertex in $S_1=\{u\}$ is adjacent in $G$ to at most $l$ vertices in $A$, and also the vertex in $S_2=\{v\}$ is adjacent in $G$  to at most $l$ vertices in $B$. }{Let $G_3$ be the graph obtained from $G_2$ by adding the edges in $\bar{G}$  between $u$ and $A$ and also edges between $v$ and $B$. }

{In the case $l=0$, $G_3$ is only dependent (up to isomorphism) on the size of the sets $A,B,C$ (at most $n^3$ cases for each $n$), and by computation for $5\leq n\leq 11$ the minimum of $\mu(G_3)$ is approximately $0.697$. Now $G_3$ is a subgraph of $\bar{G}$ and again by  \eqref{eq:9},
$$\mu+\bar{\mu} \geq \frac{1}{3}+\mu(G_3) > \frac13+0.69>1.$$}

{In the case $l=1$, $u$ is adjacent in $G_3$ to all vertices in $A$ except at most one vertex and also the same happens for $v$ and $B$. By deletion of edges (if necessary) we can suppose that $A=\emptyset$ or $u$ is nonadjacent in $G_3$  to exactly one vertex in $A$, and also the simillar thing for $v$ with respect to $B$. Again the graph $G_3$ depends only (up to isomorphism) on the size of the sets $A,B,C$  and  for $ n\leq 11$ the minimum of $\mu(G_3)$ is approximately $0.518$. So we have
$$\mu+\bar{\mu} \geq \frac{l+1}{l+3}+\mu(G_3) > \frac12+0.51>1.$$}
\end{itemize}

Thus, the theorem holds for every
  $n \geq 2$. \qed

  \subsection{Maximum of the algebraic connectivity of a
    graph and its compelement} As a by-product of the proof
  of Theorem 1, we prove the following theorem.
  \begin{theorem}
    For all graphs $G$ with $n$ vertices,
    \begin{equation*}
      \max\{\mu(G), \mu(\bar{G}) \} \geq 1- O(n^{-\frac 13}).
    \end{equation*}
  \end{theorem}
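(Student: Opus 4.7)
The plan is to follow the architecture of the proof of Theorem~1 while tracking the $n$-dependence of each constituent bound quantitatively, rather than discarding it once the sum bound $\lambda + \bar\lambda \ge 1$ is verified. Keep the same setup: $x, y$ are normal Fiedler vectors of $L(G)$ and $L(\bar G)$; WLOG $\max_{i<j}|x_i - x_j| \ge \max_{i<j}|y_i - y_j|$; $x_1 = \max_i x_i$, $x_2 = \min_i x_i$.

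The easy cases dispose of themselves. If $d(v_1, v_2) \le 2$, Lemma~\ref{lem:diameter} gives $\lambda \ge 1$; if $d(v_1, v_2) > 3$, then $\mathrm{diam}(\bar G) \le 2$ and the same lemma applied to $\bar G$ gives $\bar\lambda \ge 1$. In either situation $\max\{\lambda, \bar\lambda\} \ge 1$, which is strictly stronger than needed. So the substantive case is $d(v_1, v_2) = 3$.

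In that case the proof of Theorem~1 has already produced the two per-eigenvalue estimates
\[
\lambda \ \ge\ \frac{s-1}{3} + \frac{l+1}{l+3}, \qquad \bar\lambda \ \ge\ \frac{n}{n + 2 + 3s^2 + 6sl},
\]
in terms of the structural parameters $s, l$ constructed there, and each separately lower-bounds $\max\{\lambda, \bar\lambda\}$. For $s \ge 4$ the first bound already exceeds~$1$. For $s \in \{2, 3\}$, requiring the first bound to lie below $1 - \varepsilon$ forces $l$ to be a bounded constant, so the second bound is $1 - O(1/n)$. The binding case is $s = 1$: writing the shortfall of the first as $\alpha = 2/(l+3)$ and of the second as $\beta = \Theta((1+l)/n)$, we have $\max\{\lambda, \bar\lambda\} \ge 1 - \min(\alpha, \beta)$, and the worst case over $l$ occurs at $l \asymp \sqrt n$, yielding $\min(\alpha, \beta) = O(n^{-1/2})$. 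This is sharper than the claimed $O(n^{-1/3})$ and hence more than sufficient.

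The main point of caution I expect is the analogue of Step~1 of Theorem~1, where $(x_1 - x_2)^2 < 1$: there the identity-based inequality $\lambda + \bar\lambda \ge 1/(x_1-x_2)^2$ alone only yields $\max \ge 1/2$, which is far from $1 - o(1)$. I would handle this by retaining the factor $(x_1 - x_2)^2$ in the resistance bound $\lambda \ge (x_1-x_2)^2/R^G_{1,2}$ and splitting on the size of $t := (x_1-x_2)^2$: for $t$ at most roughly $1/\sqrt 2$ the sum bound alone forces $\max$ close to~$1$; for $t$ close to~$1$ (or larger) the resistance bound reduces matters to the argument of the previous paragraph; and the intermediate regime, where neither the sum bound nor the resistance bound is tight on its own, is the delicate range where one must combine them carefully and where the slacker $O(n^{-1/3})$ rate (rather than $O(n^{-1/2})$) is the natural outcome.
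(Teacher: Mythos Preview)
Your plan works cleanly when $t:=(x_1-x_2)^2 \ge 1$ (indeed you then get the sharper rate $O(n^{-1/2})$), and the case $t\le \tfrac12$ is fine since then $\lambda+\bar\lambda\ge 2$ forces $\max\ge 1$. The gap is precisely the intermediate range $\tfrac12<t<1$, which you flag as ``delicate'' but do not actually handle. In that range the only $\lambda$-bound you have is $\lambda\ge t\cdot\bigl(\tfrac{s-1}{3}+\tfrac{l+1}{l+3}\bigr)$, and for, say, $s=1$ and $t=\tfrac34$ this gives at best $\lambda\ge\tfrac34-O(1/l)$; the sum bound gives only $\max\ge\tfrac23$; and the $\bar\lambda$-bound $n/(n+5+6l)$ can be made as small as $\Theta(1)$ by taking $l$ comparable to $n$. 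All three of your inequalities are simultaneously consistent with $\max\{\lambda,\bar\lambda\}\approx\tfrac34$, so no combination of them yields $1-o(1)$. The difficulty is intrinsic: the resistance bound $\lambda\ge t/R^G_{1,2}$ loses the factor $t$ multiplicatively, and when $t$ is bounded away from $1$ that loss cannot be recovered from the sum inequality.

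The paper closes this gap with an extra idea you are missing. Having reduced to $t\ge\tfrac12$ and $d(v_1,v_2)=3$, it picks the neighbour $v_k$ of $v_1$ lying on the ``rich'' length-$3$ path (the one adjacent to $l+1$ vertices that are all adjacent to $v_2$). From the eigen-equation one gets $x_k-x_2\ge(1-\lambda)(x_1-x_2)$, while $R^G_{k,2}\le 2/(l+1)$, so
\[
\lambda\ \ge\ \frac{(x_k-x_2)^2}{R^G_{k,2}}\ \ge\ (1-\lambda)^2\,t\,\frac{l+1}{2}\ \ge\ (1-\lambda)^2\,\frac{l+1}{4},
\]
whence $\lambda\ge 1-2/\sqrt{l+1}$ uniformly over $t\ge\tfrac12$. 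Balancing this against $\bar\lambda\ge 1-O(l/n)$ at $l\asymp n^{2/3}$ is what produces the $O(n^{-1/3})$ rate. Note how moving one step from $v_1$ to $v_k$ drops the resistance from $\Theta(1)$ to $\Theta(1/l)$ and converts the multiplicative loss in $t$ into a quadratic gain in $(1-\lambda)$; this is the mechanism your outline lacks.
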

  \begin{proof}
    It is sufficient to prove the theorem for a connected
    graph $G$ with $n$ vertices, for sufficiently large
    integers $n$. With the notations and assumptions at the
    beginning of the proof of Theorem 1, we know
    \begin{equation*}
      \mu + \bar{\mu} \geq \frac {1}{(x_1-x_2)^2}.
    \end{equation*}
    So, if $(x_1-x_2)^2 \leq \frac 12$, then
    $\mu + \bar{\mu} \geq 2$ and the maximum of
    $\mu$ and $\bar{\mu}$ is at least $1$. Thus we
    can suppose that $(x_1-x_2)^2 \geq \frac 12$. Now,
    similar to Step 2 of the proof of Theorem 1, we can
    suppose that both $\mu$ and $\bar{\mu}$ are
    smaller than $1$. This implies that the distance between
    $v_1$ and $v_2$ in $G$ is equal to $3$. Let $s \geq 1$
    be the maximum number of vertex-disjoint paths with length 3
    between two vertices $v_1$ and $v_2$ in $G$. Therefore,
    \begin{equation*}
      1 > \mu \geq \frac{(x_1-x_2)^2}{R^G_{1,2}}
      \geq \frac 12 \times \frac s3 = \frac s6.
    \end{equation*}
    So $s \leq 5$. Now, by \eqref{eq:8}, with the notations
    which was defined in Step 2.2 of the proof of Theorem 1,
    we have
    \begin{equation*}
      \bar{\mu} \geq \frac{n}{n+2+3s^2+6sl}
      \geq \frac{n}{n + 80 + 30 l}.
    \end{equation*}
    On the other hand, note that according to the definition
    of $l$, $G$ contains {the subgraph $G_1$ which is} illustrated in
    Figure \ref{fig:L}. Without loss of generality, we can
    assume that the left vertex in Figure \ref{fig:L} is
    $v_1$ and the right vertex is $v_2$. Now we have
    \begin{equation*}
      \mu x_1 = \sum_{\{1,i\}\in E(G)}(x_1 - x_i) \geq (x_1-x_k).
    \end{equation*}
    So $x_k \geq (1-\mu)x_1$ and
    $x_k-x_2 \geq (1-\mu)(x_1-x_2)$. Therefore,
    \begin{equation*}
      1>\mu=\sum_{\{i,j\} \in E(G)}(x_i-x_j)^2 \geq \frac{(x_k-x_2)^2}{R_{k,2}^G} \geq (1-\mu)^2\times \frac 12 \times \frac{l+1}{2}. 
    \end{equation*}
    Thus, $(1-\mu)^2 \leq \frac{4}{l+1}$ and
    $\mu \geq 1- \frac {2}{\sqrt{l+1}}$. Now, we
    consider two cases:
    \begin{enumerate}
    \item $n < l ^{\frac 32}$. Thus
      $n^{\frac 13} < l^{\frac 12}$ and
      \begin{equation*}
        \mu \geq 1 - \frac{2}{\sqrt{l}} \geq 1 - \frac{2}{n^{\frac 13}}.
      \end{equation*}
    \item $n \geq l^{\frac 32}$. So $l \leq n^{\frac 23}$,
      $\frac ln \leq n^{-\frac 13}$, and
      \begin{equation*}
        \bar{\mu} \geq 1- \frac{80+30l}{n+80+30l} \geq 1- \frac{80}{n} - \frac{30l}{n} \geq 1 - \frac{110}{n^{\frac 13}}.
      \end{equation*}
    \end{enumerate}
    Therefore, in all cases we have
    $\max\{\mu,\bar{\mu}\} \geq 1-
    O(n^{-\frac{1}{3}})$.
  \end{proof}

  \paragraph{Remark.} For each $n \geq 4$, define a graph
  $G$ with vertices $\{v_1, \ldots, v_n\}$ such that the
  induced subgraph on $\{v_3,\ldots v_n \}$ is a complete
  graph, $v_1$ is only adjacent to $v_3$ and $v_2$ is only
  adjacent to $v_4$. One can observe that
  $$\mu(G) = \mu(\bar{G})= 
  \frac{n-\sqrt{n^2-4n+8}}{2}= 1 - \frac{1}{n} + O(\frac
  {1}{n^2})< 1.$$ Therefore, for each $n \geq 4$, there
  exist a graph $G$ which has $n$ vertices and the maximum
  of $\mu(G)$ and $\mu(\bar{G}) $ is less than
  1.


  \bibliographystyle{alpha}
  \bibliography{LaplacianSpread}

\vspace{10pt}
\noindent Mostafa  Einollahzadeh, Isfahan University of Technology, Isfahan, Iran

\noindent e-mail: m\_einollahzadeh@iut.ac.ir

\vspace{5pt}
\noindent
Mohammad Mahdi Karkhaneei, Hesaba Co., Tehran, Iran

\noindent e-mail: karkhaneei@hesaba.co

\end{document}